 \newtheorem{theorem}{Theorem}[section]
 \newtheorem{corollary}[theorem]{Corollary}
 \theoremstyle{definition}
 \theoremstyle{remark}
 \numberwithin{equation}{section}
\newcommand{\rpt}{\ensuremath{\mathbb{R}\mathrm{P}^2}}
\newcommand{\rp}{\ensuremath{\mathbb{R}\mathrm{P}^2} }
\newcommand{\I}{\mathcal{I}}
\newcommand{\ba}{\setminus}
\begin{document}

%-------------------------------------------------------------------------
% editorial commands: to be inserted by the editorial office
%
%\firstpage{1} \volume{228} \Copyrightyear{2004} \DOI{003-0001}
%
%
%\seriesextra{Just an add-on}
%\seriesextraline{This is the Concrete Title of this Book\br H.E. R and S.T.C. W, Eds.}
%
% for journals:
%
%\firstpage{1}
%\issuenumber{1}
%\Volumeandyear{1 (2004)}
%\Copyrightyear{2004}
%\DOI{003-xxxx-y}
%\Signet
%\commby{inhouse}
%\submitted{March 14, 2003}
%\received{March 16, 2000}
%\revised{June 1, 2000}
%\accepted{July 22, 2000}
%
%
%
%---------------------------------------------------------------------------
%Insert here the title, affiliations and abstract:
%
%\linenumbers

\title[Ribbon graph minors and low-genus partial duals]
 {Ribbon graph minors and low-genus partial duals}

%----------Author 1
\author[I.~Moffatt]{Iain Moffatt}
\address{Department of Mathematics,
Royal Holloway,
University of London,
Egham,
Surrey,
TW20 0EX,
United Kingdom.}
\email{iain.moffatt@rhul.ac.uk}
%----------classification, keywords, date
\subjclass{Primary 05C83, 05C10}

\keywords{embedded graph, excluded minor, real projective plane, knots, minor, ribbon graph.}

\date{\today}

\begin{abstract}
We give an excluded minor characterisation of the class of ribbon graphs that admit partial duals of Euler genus at most one.
\end{abstract}

%%% ----------------------------------------------------------------------
\maketitle
%%% ----------------------------------------------------------------------
%\tableofcontents

\section{Statement of results}
Here we are interested in minors of ribbon graphs. For ribbon graph minors it is necessary to contract loops and doing so can create additional vertices and components \cite{MR1906909,MR2507944}. Thus there is a fundamental difference between graph and ribbon graph minors.
While it is known that every minor-closed family of graphs can be characterised by a finite set of excluded minors \cite{MR2099147}, the corresponding result for ribbon graphs and their minors is currently only a conjecture \cite{Moffatt:2013ty}. 
In this note we provide some support for this conjecture by giving an explicit list of excluded minors for an interesting class of ribbon graphs: those that admit partial duals of low  genus. Our main result is the following.
\begin{theorem}\label{t1} 
Let $X_1$--$X_3$ be the ribbon graphs of Figure~\ref{f1}. 
Then a ribbon graph has a partial dual of Euler genus at most one if and only if it has no ribbon graph minor equivalent to $X_1$, $X_2$, or $X_3$. 
\end{theorem}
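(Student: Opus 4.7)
The plan is to prove the two directions of the equivalence separately.

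For necessity (the excluded minors really are obstructions), I would first verify by direct computation that none of $X_1$, $X_2$, $X_3$ has a partial dual of Euler genus at most one. Since these are small ribbon graphs, one can list all $2^{|E(X_i)|}$ partial duals explicitly, or apply the known combinatorial formula expressing the Euler genus of $G^A$ in terms of the signed interlacement data of the pair $(G,A)$. I would then show that the class of ribbon graphs admitting a partial dual of Euler genus at most one is closed under ribbon graph minors. The key input is the compatibility of partial duality with deletion and contraction: identities such as $(G \ba e)^A = G^A \ba e$ and $G/e = (G^{\{e\}}) \ba e$ imply that every ribbon graph minor of $G$ is, up to partial duality, a deletion minor of a partial dual of $G$. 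Since deletion cannot increase Euler genus, if $G$ has a partial dual of Euler genus at most one then so does every ribbon graph minor of $G$; the contrapositive of this yields necessity.

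For sufficiency I would argue the contrapositive: if $G$ has no partial dual of Euler genus at most one, then some $X_i$ is a ribbon graph minor of $G$. I would consider a minor-minimal counterexample --- a ribbon graph $G$ with no low-genus partial dual all of whose proper minors do admit one --- and prove that any such $G$ must be equivalent to $X_1$, $X_2$, or $X_3$. By applying partial duality one can reduce to a controlled local structure, for instance to a bouquet-like configuration at each vertex obtained by partially dualising away any non-loop edges. A combinatorial criterion for partial dual genus, via signed chord diagrams or interlacement graphs, then lets one pinpoint the structural obstructions. Each obstructing configuration --- for example, a pair of interlaced loops coupled with a non-orientable loop, or a handle configuration that no choice of $A$ can cancel --- should be shown to contain one of the $X_i$ as a minor.

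The main obstacle is the sufficiency direction, and specifically the completeness of the three-element list. The condition ``admits a partial dual of Euler genus at most one'' is inherently global, involving an optimisation over $2^{|E(G)|}$ subsets of edges, and the resulting Euler genus is sensitive to delicate combinatorial interactions between loops at possibly distinct vertices. Translating this global condition into a short list of local excluded minors requires both a careful case check of the small ribbon graphs and a clean structural reduction guaranteeing that every larger obstruction contains a smaller one. I expect that verifying that no further excluded minor has been overlooked --- particularly among non-orientable or multi-handled configurations that sit just above Euler genus one --- will be the most delicate part of the argument.
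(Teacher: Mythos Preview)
Your necessity direction is essentially the paper's: verify directly that the $X_i$ have no partial dual of Euler genus at most one, and use the compatibility of partial duality with deletion/contraction to conclude that the class is minor-closed.

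The sufficiency direction, however, is only a sketch, and it is missing the structural tool that actually drives the paper's argument. The paper does not proceed via a minor-minimal counterexample. Instead it invokes a \emph{rough structure theorem} (Theorem~\ref{t2}): for a connected ribbon graph $G$ and $A\subseteq E(G)$, the partial dual $G^A$ is plane (respectively $\mathbb{R}\mathrm{P}^2$) if and only if $A$ defines a plane-biseparation (respectively $\mathbb{R}\mathrm{P}^2$-biseparation). This converts the global optimisation over $2^{|E(G)|}$ subsets --- which you correctly flag as the difficulty --- into a concrete local decomposition condition. The paper then cites the orientable case (no plane-biseparation forces an $X_2$- or $X_3$-minor) from prior work and, like you, uses partial duality along a spanning tree to reduce the remaining non-orientable case to bouquets. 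The heart of the proof is an analysis of the \emph{intersection graph} $\mathcal I(G)$ of such a bouquet: absence of an $\mathbb{R}\mathrm{P}^2$-biseparation translates into non-bipartiteness of a certain quotient of $\mathcal I(G)$, and a minimal odd cycle in that quotient is shown, by a delicate case analysis on where the ends of the non-orientable loops can sit, to contract to $X_1$ or $X_2$.

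Your proposal gestures at ``a combinatorial criterion for partial dual genus, via signed chord diagrams or interlacement graphs'', but never names one; without the biseparation characterisation (or an equivalent), the plan to ``pinpoint the structural obstructions'' in a minor-minimal counterexample has no traction. In particular, the step where you hope that ``each obstructing configuration \ldots\ should be shown to contain one of the $X_i$'' is exactly where the paper spends almost all of its effort, and it requires the biseparation/odd-cycle translation to become tractable. As written, your sufficiency argument is not yet a proof strategy so much as a statement of what would need to be true.
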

Although we assume a familiarity with ribbon graphs, ribbon graph minors and partial duals (see, for example,  \cite{MR3086663} for a leisurely overview of these topics, or Sections~2 and~3.2 of  \cite{Moffatt:2013ty} for a brief review), we briefly recall that  the partial dual $G^A$ of a ribbon graph $G$ is the ribbon graph obtained by, roughly speaking, forming the geometric dual of $G$ but only with respect to a given set $A$ of edges of $G$. Partial duality was defined by Chmutov in \cite{MR2507944}. It appears to be a funtamental construction, arising  as a natural operation in knot theory, topological graph theory, graph polynomials, matroid theory, and quantum field theory.  The \emph{Euler genus} of a connected ribbon graph $G$ is its genus if it is non-orientable, and twice its genus otherwise.  For non-connected ribbon graphs it is defined as the sum of the Euler genera of its components.
A connected ribbon graph of Euler genus 0 is a  \emph{plane ribbon graph}, and of Euler genus 1 is an  \emph{\rp ribbon graph}.

Our approach to Theorem~\ref{t1} is as follows. We use a compatibility between partial duals and minors (Equation~\ref{e1}) to reduce the proof of the theorem to showing that a  bouquet (i.e., a 1-vertex ribbon graph) 
whose partial duals are all of Euler genus at least two  contains $X_1$ or $X_2$ as a minor. We then apply a `rough structure theorem' (Theorem~\ref{t2}) that tells us that such a ribbon graph $G$ does not have a particular type of decomposition into a set of plane graphs and one \rp graph. $X_1$ is an obstruction to finding an appropriate   \rp graph ribbon subgraph in $G$ for the decomposition, $X_2$ is an obstruction to finding an appropriate set of plane ribbon subgraphs for the decomposition, and, somewhat surprisingly, $X_1$ is also an obstruction to  plane and \rp subgraphs fitting together in a way required by the rough structure theorem to allow for a low-genus partial dual.

\begin{figure}[t]
\begin{subfigure}[b]{.3\linewidth}
\centering
 \includegraphics[scale=0.4]{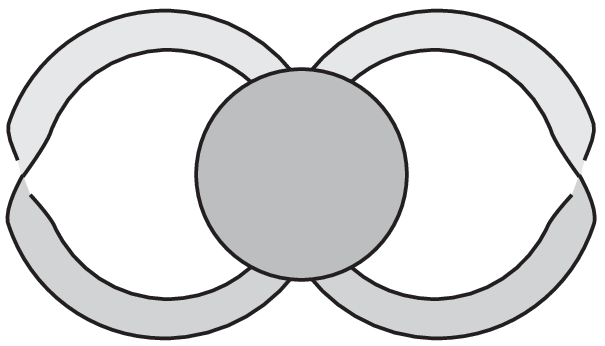}
\caption{$X_1$.}
\label{f1a}
\end{subfigure}
\begin{subfigure}[b]{.3\linewidth}
\centering
 \includegraphics[scale=0.4]{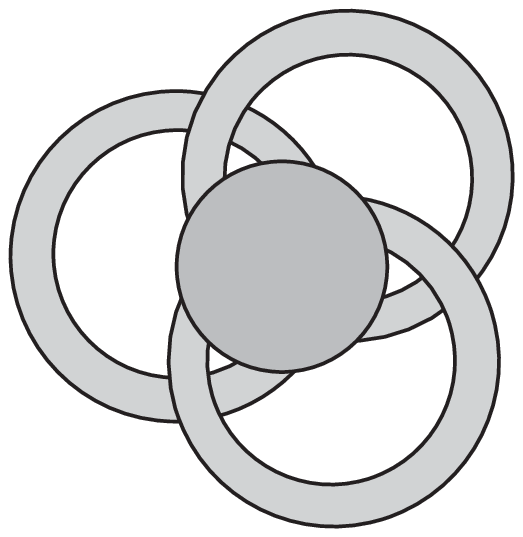}
\caption{$X_2$.}
\label{f1b}
\end{subfigure}
\begin{subfigure}[b]{.3\linewidth}
\centering
 \includegraphics[scale=0.4]{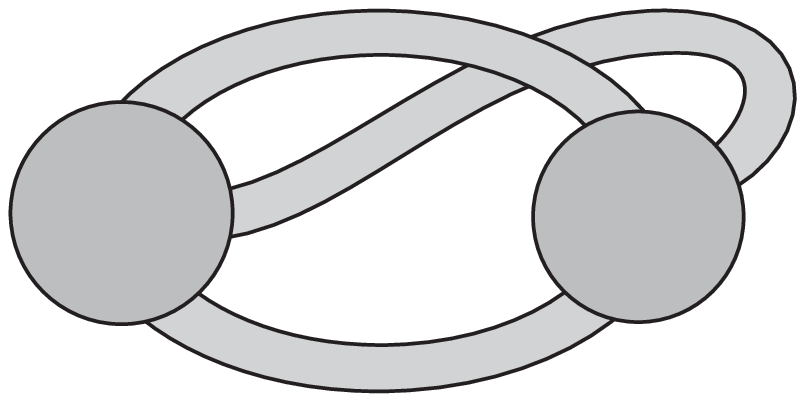}
\caption{$X_3$.}
\label{f1c}
\end{subfigure}
\caption{The excluded minors.}
\label{f1}
\end{figure}

\section{Proof of the main theorem}
The following constructions and results will be needed for the  proof  of Theorem~\ref{t1}.
A vertex $v$ of a ribbon graph $G$ is  a {\em separating vertex} if there are non-trivial ribbon subgraphs $G_1$ and $G_2$ of $G$ such that $G=G_1\cup G_2$ and $G_1\cap G_2=\{v\}$. Let $G$ be a ribbon graph and $A\subseteq E(G)$. We use  $G|_A$ to denote  the restriction of $G$ to $A$ (i.e., the ribbon subgraph with edge set $A$ and  vertices incident to edges in $A$), and $A^c$ to denote the complement of $A$ in $E(G)$. 
Following  \cite{MR2994404},  we say that  $A$ {\em defines a biseparation} of $G$ if every vertex of $G$ that is in both $G|_A$ and $G|_{A^c}$ is a separating vertex of $G$.
If, in addition, every component of $G|_A$ and of $G|_{A^c}$ is plane then we say $A$ defines a \emph{plane-biseparation}; and if exactly one component of   $G|_A$ or of $G|_{A^c}$ is \rp and all of the other components are plane then $A$ defines an \emph{\rpt-biseparation}.

We will use the following rough structure theorem for ribbon graphs with low genus partial duals from \cite{MR2994404} (the plane case first appeared in \cite{MR2928906}).
\begin{theorem}\label{t2}
Let $G$ be a connected ribbon graph and $A\subseteq E(G)$. Then  $G^A$ is a plane ribbon graph (respectively,  \rp  ribbon graph)  if and only if $A$ defines a plane-biseparation (respectively,  \rpt-biseparation) of $G$.
\end{theorem}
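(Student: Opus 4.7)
The plan is to prove both directions by combining a structural compatibility between partial duality and one-vertex amalgamation with the additivity of Euler genus under such amalgamations. I treat the plane and \rpt~cases together.

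\textbf{Forward implication.} Assume $A$ defines a plane-biseparation (respectively, \rpt-biseparation) of $G$. Since every vertex in $V(G|_A)\cap V(G|_{A^c})$ is a separating vertex of $G$, the ribbon graph $G$ can be expressed as an iterated one-vertex amalgamation of the components of $G|_A$ and $G|_{A^c}$ glued along those shared separating vertices. Partial duality along $A$ respects this amalgamation because $A$ is entirely contained in $G|_A$: forming $G^A$ amounts to replacing each component $C$ of $G|_A$ by its geometric dual $C^{\ast}$, leaving each component of $G|_{A^c}$ untouched, and re-amalgamating at the same separating vertices. Using that geometric duality preserves Euler genus and that one-vertex amalgamation of connected ribbon graphs is additive in Euler genus, I would deduce
\[
\gamma(G^A) \;=\; \sum_{C} \gamma(C) \;+\; \sum_{D} \gamma(D),
\]
where $C$ ranges over components of $G|_A$ and $D$ over components of $G|_{A^c}$. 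In the plane-biseparation case every summand is $0$, giving $G^A$ plane; in the \rpt-biseparation case exactly one summand equals $1$ (from the unique non-orientable genus-one component) and the rest vanish, so $G^A$ is connected of Euler genus $1$ and non-orientable, i.e.\ an \rpt~ribbon graph.

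\textbf{Reverse implication.} Assume $G^A$ has Euler genus at most $1$. The first step is to show $A$ defines a biseparation. Suppose for contradiction that some $v\in V(G|_A)\cap V(G|_{A^c})$ is not a separating vertex of $G$. In an arrow presentation at $v$, the cyclic order of edge-ends at $v$ must then contain an \emph{interleaving} of an $A$-end with an $A^c$-end that blocks any splitting of $G$ at $v$ into an $A$-part and an $A^c$-part. A direct computation of the arrow presentation of $G^A$ (partial duality reverses and relocates the arrow-marks corresponding to the $A$-edges) shows that each such interleaving pattern contributes either an extra handle or an extra pair of crosscaps to the surface of $G^A$, forcing $\gamma(G^A)\geq 2$. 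Checking this on the short list of minimal interleaving configurations at $v$ gives the desired contradiction, so $A$ must define a biseparation.

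Once a biseparation is in hand, the genus-additivity formula from the forward direction applies, and the inequality
\[
\gamma(G^A) \;=\; \sum_C \gamma(C) + \sum_D \gamma(D) \;\leq\; 1
\]
forces the component structure: if $G^A$ is plane then every summand vanishes, so every component of $G|_A$ and of $G|_{A^c}$ is plane, and $A$ defines a plane-biseparation; if $G^A$ is \rpt~then exactly one summand is $1$, and non-orientability of $G^A$ pins this component down to be an \rpt~ribbon graph with the others plane, giving an \rpt-biseparation.

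\textbf{Main obstacle.} The delicate step is the local lemma in the reverse direction, namely that a shared non-separating vertex $v$ forces $\gamma(G^A)\geq 2$. I expect to handle it by working with arrow presentations, reducing to a finite list of minimal interleaving patterns at $v$, and computing the Euler genus contribution of each after partial duality. The forward direction and the deduction of the biseparation type from genus-additivity are then essentially bookkeeping.
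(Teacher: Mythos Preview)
This theorem is not proved in the paper: it is quoted from \cite{MR2994404} (the plane case from \cite{MR2928906}) and invoked as a black box in the proof of Theorem~\ref{t1}. There is thus no argument here to compare your proposal against.

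As for your sketch itself, the forward direction contains a real gap. You assert that a biseparation exhibits $G$ as an iterated one-vertex amalgamation of the components of $G|_A$ and $G|_{A^c}$, but the definition recorded here does not yield that: it only asks each shared vertex to be separating in $G$, and such a split need not separate the $A$-edges from the $A^c$-edges at that vertex. For instance, in the bouquet with cyclic edge-end order $a\,b\,a\,b\,c\,c$ and $A=\{a,c\}$, the unique vertex is separating (one may split off the trivial loop $c$), so $A$ is a plane-biseparation in the sense of this note, yet the $A$- and $A^c$-ends interlace and $G$ is not a one-vertex amalgamation of $G|_A$ with $G|_{A^c}$. The additivity identity you want does hold, but it needs a different justification; the proofs in \cite{MR2994404,MR2928906} work with a finer description of how $A$ sits inside $G$ than the abbreviated definition reproduced here. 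For the reverse direction you correctly identify the ``local lemma'' as the crux but leave it as an unspecified finite case-check; that is indeed where all the work lies, and the reduction to ``minimal interleaving patterns'' is neither obviously finite nor how the cited proofs proceed.
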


We will also need the following result from \cite{Moffatt:2013ty} that says the partial duals of minors are the minors of a partial dual.  
For a ribbon graph $G$ and $A\subseteq E(G)$, we have 
\begin{equation}\label{e1} 
\{ J^A \mid J \text{ is a minor of } G \}  =  \{ H \mid H \text{ is a minor of } G^A \} .
\end{equation}
Here we use the convention that if $H$ is a minor of $G$, and $A\subseteq E(G)$, then by $H^A$ we mean $H^{A\cap E(H)}$.

A  \emph{bouquet} is a ribbon graph that has exactly one vertex. Two edges $e$ and $f$ in a bouquet are \emph{interlaced} if their ends are met in the cyclic order $e\, f\, e\; f$ when travelling round the boundary of the unique vertex of $G$. The \emph{intersection graph} $\I(G)$ of a bouquet $G$ is the vertex weighted simple graph whose vertices set is $E(G)$ and  which two vertices $e$ and $f$ of $\I(G)$ are adjacent if and only if the edges $e$ and $f$ are interlaced in $G$. A vertex  $e$ of $\I(G)$ has weight ``+'' if $e$ is an orientable loop in $G$, and has weight $``-$'' if it is non-orientable.

\begin{proof}[Proof of Theorem~\ref{t1}]
For one implication observe that $X_1$--$X_3$ do not have partial duals of Euler genus less than 1.  It was shown in \cite{Moffatt:2013ty} that  for each $k\in \mathbb{N}_0$ the set 
of all ribbon graphs that have a partial dual of Euler genus at most $k$ is minor-closed, and so $X_1$--$X_3$  cannot be minors of ribbon graphs that have partial duals of Euler genus less than 1. 

For the converse we prove that if a ribbon graph does not admit a plane- or \rpt-biseparation, then it has an  $X_1$--$X_3$-minor. The result then follows from Theorem~\ref{t2}.  If $G$ is an orientable ribbon graph that does  not admit a plane-biseparation then, from \cite{MR2928906},  it  contains $X_2$ or $X_3$ as a minor. Now suppose that $G$ is a non-orientable ribbon graph that does not admit an \rpt-biseparation. To complete the proof we need to show that $G$ has an \mbox{$X_1$--$X_3$-minor.} By Equation~\eqref{e1}  it is enough to show that when $G$ is a bouquet it has   $X_1$ or $X_2$ as a minor. (To see why, suppose $G$ has no  \rpt-biseparation, and let $T$ be the edge set of a spanning tree of $G$. Then $G^T$ is a bouquet. If $G^T$ has $X_1$ or $X_2$ as a minor, by Equation~\ref{e1}, $G=(G^T)^T$ has  $X_1^T$ or $X_2^T$ as a minor. Finally, $X_1^T=X_1$, and $X_2^T = X_2$ or $X_2^T=X_3$.)
Assume that $G$ is a non-orientable bouquet that does not admit an \rpt-biseparation. 
We can write $G=G_O \sqcup G_N$ where $G_O$ is the ribbon subgraph of $G$ consisting of all of the orientable loops, and $G_N$  the non-orientable loops.  If $G_N$ has two edges that are not interlaced then these two edges induce an $X_1$-minor and we are done. Otherwise  $G_N$ consists of $q\geq 1$ non-orientable loops that all interlace each other (i.e., the edges are met in the cyclic order $1\,2\cdots q\,1\,2\cdots q$). Assume this is the case. 

Next consider $G_O$. Suppose the intersection graph $\I(G_O)$ is not bipartite. It then contains an odd cycle of length at least 3.   This odd cycle corresponds to a ribbon subgraph of $G$ that is  an orientable bouquet with $p\geq 3$ edges, for $p$ odd,  that meet the vertex in the cyclic order 
$ 2\,1\,3\,2\,4\,3\cdots p\,(p-1)\,1\,p$. Contracting the edges corresponding to $4, 5, \ldots ,p$ results in a copy of $X_2$ (see the example in \cite{Moffatt:2013ty}).   

Now suppose $\I(G_O)$ is  bipartite. Consider the graph $\I'(G)$ which is obtained from $\I(G)$ by identifying all of the negatively weighted vertices into a single negatively weighted vertex $v$, and deleting any loops created in this process (so all of the edges in $G_N$ are represented by a single vertex $v$).  
$\I'(G)$  cannot be bipartite since otherwise $G$ would admit an \rpt-biseparation (if $\I'(G)$ is 2-coloured then the set $A$ of all edges of $G$ of a  single colour defines an \rpt-biseparation). 
We therefore have that $\I'(G)$ is not bipartite and therefore contains an odd cycle. Let $C$ be a minimal odd cycle of $\I'(G)$. Since $\I(G_O)=\I'(G) \ba v$ is bipartite, this odd cycle must contain $v$. Furthermore, by minimality, the subgraph of $\I'(G)$ induced by $C$ must also be $C$ (since adding any other edge between the vertices of $C$ will create a smaller cycle).  Suppose the vertices of $C$ are $1,2,\cdots ,2m,v$ in that order.  Consider the ribbon subgraph $H$ of $G$ corresponding to $C$. Since $C$ is minimal, $H$ consists of $2m$ orientable loops, which we name $1, \ldots, 2m$, whose ends appear in the  order
$1\,2\,1\,3\,2\, 4\, 3 \cdots (2m-1) \,(2m-2) \,(2m) \,(2m-1) \,(2m)$, together with some number of non-orientable loops such that a non-orientable loop interlaces with 1, a (not necessarily distinct) orientable loop interlaces with $2m$, and all of the non-orientable loops interlace with each other (since the non-orientable part is $G_N$). See Figure~\ref{f2}.  
With respect to this order, we name the arcs on the boundary of the unique vertex of $H$ that are bounded by the orientable edges on $H$ as follows:   
%\begin{linenomath} %to work around line numbers bug
\begin{multline*}
 \varepsilon \,1 \, \alpha  \, 2\, \gamma_1 \,1 \, \delta_1\,  3\, \gamma_2 \,2\, \delta_2 \,4\, \gamma_3 3    \cdots    (2m-1)\, \gamma_{(2m-2)} \,
 \\
 (2m-2) \,  \delta_{(2m-2)}\, 
 (2m) \,\gamma_{(2m-1)}\, (2m-1) \,  \beta  \,(2m) \, \varepsilon,
 \end{multline*}
%\end{linenomath}
 as in Figure~\ref{f2}.  
 The ends of the non-orientable edges of $H$ lie on these arcs, and we will use the names of these arcs to discuss the possible ways that the non-orientable edges in $H$ can be interlaced with its orientable edges, and for each case to describe how the find the required ribbon graph minor.

\begin{figure}[t]
\centering
\labellist \small\hair 2pt
\pinlabel {$1$}   at 12 162
\pinlabel {$2$}  at 14 80
\pinlabel {$3$}   at  75 15
\pinlabel {$2m$}  [l] at  273 162
\pinlabel {$\alpha$}   at  87 166 
\pinlabel {$\beta$}   at  201 166 
\pinlabel {$\gamma_1$}  at  85 133  
\pinlabel {$\gamma_2$}   at  102 98 
\pinlabel {$\gamma_3$}   at  133 82  
\pinlabel {$\gamma_{(2m-1)}$}  [r] at  215 136 
\pinlabel {$\varepsilon$}   at  144 203
 \pinlabel {$\delta_1$}   at  49 96
  \pinlabel {$\delta_2$}   at  93 52
\endlabellist
 \includegraphics[scale=0.6]{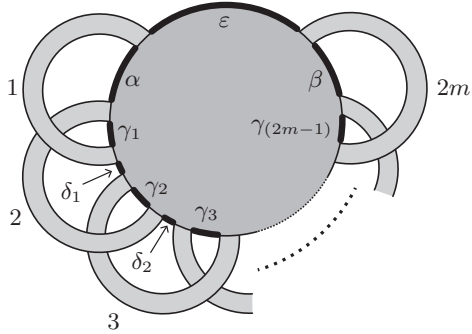}
\caption{$H$ without its non-orientable loops, and the labelled arcs on its vertex.}
\label{f2}
\end{figure}

First suppose that $m=1$. If there is a single non-orientable loop $e$ interlacing both $1$ and $2$ then the ribbon subgraph with edges $1,2,e$ is given by the order of the ends $ 1 \,e \,2 \,1 \,e \,2$ or $1 \,2 \,e \,1 \,2 \,e$   around the vertex. In either case contracting $e$ results in an $X_1$-minor. So suppose that no non-orientable loop interlaces both 1 and 2. Then there is a  non-orientable loop $e$ interlacing  $1$ but not 2, and a  non-orientable loop $f$ interlacing  $2$ but not 1. Then the ends of $e$ are therefore on $\alpha$ and $\varepsilon$, or $\gamma_1$ and $\beta$ (otherwise it interlaces 2).   Similarly, the ends of $f$ are therefore on $\beta$ and $\varepsilon$, or $\gamma_1$ and $\alpha$. Using the fact that all non-orientable loops interlace, $H$ must therefore contain one of the four ribbon subgraphs with edges $1,2,e,f$ and their positions given by one of $1\,e\,2\,1\,f\,2\,e\,f$, or $1\,f\,e\,2\,f\,1\,2\,e$, or $1\,2\,e\,1\,f\,e\,2\,f$, or $1\,f\,2\,e\,f\,1\,e\,2$. In all of these cases contracting the edges 1 and 2 results in a copy of $X_1$, as required. Thus we can obtain the required minor when $m=1$.

Now suppose that $m>1$. First consider the case where there is a single non-orientable loop $e$ interlacing both $1$ and $2m$. Then $e$ cannot have ends on either of the arcs $\gamma_1$ or $\gamma_{(2m-1)}$, as this could mean that $C$ is not a minimal odd cycle in $\I'(G)$ (given by the edges $1,2,e$ or $(2m-1),(2m),e$). Thus $e$ has ends on $\alpha$ and $\beta$. Deleting all of the non-orientable loops of $H$ except for $e$, then contracting  the edges $e, 2,3,\ldots , (2m-1)$  gives an $X_1$-minor. 

Now consider the case when  no non-orientable loop  interlaces both $1$ and $2m$.  
Then there is a  non-orientable loop $e$ interlacing  $1$ but not $2m$, and a  non-orientable loop $f$ interlacing  $2m$ but not 1, 
$e$ has an end on $\alpha$ or $\gamma_1$, and $f$ has an end on $\beta$ or $\gamma_{(2m-1)}$. 

First suppose that $e$ has an end on $\gamma_1$. Then its other end must be on $\delta_1$ or $\gamma_2$. (This is since either $e$ would not be interlaced with 1, or the edges $1,2,e$ will induce an odd cycle smaller that $C$ in $\I'(G)$.)  Consider $f$. Since all non-orientable loops interlace and $f$ does not interlace with $1$, we have that if $e$ has an end on $\delta_1$ then $f$ has an end on $\delta_1$, and if  $e$ has an end on $\gamma_2$ then  $f$ has an end on $\delta_1$ or $\gamma_2$. In all three cases the edges $1,2,e,f$ induce an odd cycle smaller that $C$ in $\I'(G)$. Thus $e$ cannot have an end on $\gamma_1$, and must have an end on $\alpha$.
A similar argument shows that $f$  cannot have an end on $\gamma_{(2m-1)}$, and  must have an end on $\beta$. We now examine where the other ends of $e$ and $f$ can lie.

If $e$ has an end on some $\gamma_i$ for $i\in \{2, \ldots ,(2m-1)\}$  (we have shown that it cannot have an end on $\gamma_1$) then the edges $e, i, i+1$ will induce an odd cycle smaller that $C$ in $\I'(G)$.
Similarly,  if $f$ has an end on some $\gamma_i$ for $i\in \{1, \ldots ,(2m-2)\}$  (we have shown that it cannot have an end on $\gamma_{(2m-1)}$) then the edges $f, i, i+1$ will induce an odd cycle smaller that $C$ in $\I'(G)$. 
 Thus $e$ has one end on  $\alpha$, and one on $\varepsilon$ or on some $\delta_i$; and $f$ has one end on  $\beta$ and one on $\varepsilon$ or on some $\delta_j$. 
If one of $e$ or $f$ has an end on  $\delta_i$, then, by interlacement, the other has an end on some $\delta_j$.
  If $e$ has an end on $\delta_{2k-1}$, for some $k$, then the edges $e,1, \ldots , 2k$ will induce an odd cycle smaller that $C$ in $\I'(G)$.
Similarly, if $f$ has an end on $\delta_{2k}$, for some $k$, then the edges $2k+1, \ldots , 2m$ will induce an odd cycle smaller that $C$ in $\I'(G)$.
 It follows that if $e$ has an end on $\delta_{2k}$ then  $f$ has an end on $\delta_{2j-1}$ for some $j\leq k$, but then the edges $e,f, 2j, \ldots , (2k+1)$ will induce an odd cycle smaller that $C$ in $\I'(G)$. 
 Thus we have that  $e$ has  ends on  $\alpha$ and $\varepsilon$ and  $f$ has  ends on  $\beta$ and $\varepsilon$, with $e$ and $f$ interlaced. Deleting all of the non-orientable loops of $H$ except for $e$ and $f$, and contracting all of the orientable loops results in a copy of $X_1$.  Thus we have shown in all cases that $H$, and therefore $G$ has an $X_1$-minor. This completes the proof of the theorem.
\end{proof}

We conclude with an application of Theorem~\ref{t1} to knot theory.  Dasbach,  Futer, Kalfagianni, Lin and Stoltzfus, in  \cite{MR2389605}, described how  every alternating link can be represented by a ribbon graph. This construction readily extends to link diagrams on other surfaces. Using, from \cite{SM}, that a ribbon graph represents a checkerboard colourable  diagram of a link in real projective space, $\mathbb{R}\mathrm{P}^3$, if and only if it has a plane or \rp partial dual immediately gives the following corollary.
\begin{corollary}
A ribbon graph  $G$ represents a checkerboard colourable  diagram of a link in real projective space  if and only if it has no minor equivalent to $X_1$, $X_2$, or $X_3$. 
\end{corollary}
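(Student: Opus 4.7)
My plan is to deduce this corollary directly by chaining together two equivalences: the topological/knot-theoretic characterisation from \cite{SM} and the combinatorial characterisation provided by Theorem~\ref{t1}.

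First I would invoke the result from \cite{SM} cited in the paragraph preceding the corollary. That result asserts that a ribbon graph $G$ represents a checkerboard colourable diagram of a link in $\mathbb{R}\mathrm{P}^3$ if and only if $G$ admits a partial dual that is either a plane ribbon graph or an \rp ribbon graph. So the first step is simply to rewrite the left-hand side of the biconditional in the corollary as the statement that $G$ has a partial dual that is plane or \rpt.

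Next, I would observe that, by the definition of Euler genus recalled after Theorem~\ref{t1}, a connected ribbon graph is plane precisely when its Euler genus is $0$, and is \rp precisely when its Euler genus is $1$. Hence the condition ``$G$ has a partial dual that is plane or \rpt'' is the same as ``$G$ has a partial dual of Euler genus at most $1$.'' Applying Theorem~\ref{t1}, this in turn is equivalent to $G$ having no ribbon graph minor equivalent to $X_1$, $X_2$, or $X_3$, which is precisely the right-hand side of the biconditional.

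Putting these steps together gives the corollary. There is no real obstacle: both of the ingredients are already in place, and the argument is essentially a translation between the knot-theoretic language of \cite{SM} and the ribbon graph language of Theorem~\ref{t1} via the Euler genus condition. The only thing one should be careful about is to note explicitly that one is allowed to combine plane and \rp partial duals into a single ``Euler genus at most $1$'' condition, since Theorem~\ref{t1} is phrased in exactly that form.
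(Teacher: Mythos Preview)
Your proposal is correct and matches the paper's own argument: the paper simply states that the corollary follows immediately by combining the result from \cite{SM} with Theorem~\ref{t1}, and you have spelled out exactly that chain of equivalences. There is nothing further to add.
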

Finding the corresponding result for non-checkerboard colourable diagrams is an interesting  open problem.

% ------------------------------------------------------------------------

%\subsection*{Acknowledgment}

\bibliographystyle{plain}
\bibliography{references}

\begin{thebibliography}{1}

\bibitem{MR1906909}
B{{\'e}}la Bollob{{\'a}}s and Oliver Riordan.
\newblock A polynomial of graphs on surfaces.
\newblock {\em Math. Ann.}, 323(1):81--96, 2002.

\bibitem{MR2507944}
Sergei Chmutov.
\newblock Generalized duality for graphs on surfaces and the signed
  {B}ollob{\'a}s-{R}iordan polynomial.
\newblock {\em J. Combin. Theory Ser. B}, 99(3):617--638, 2009.

\bibitem{MR2389605}
Oliver~T. Dasbach, David Futer, Efstratia Kalfagianni, Xiao-Song Lin, and
  Neal~W. Stoltzfus.
\newblock The {J}ones polynomial and graphs on surfaces.
\newblock {\em J. Combin. Theory Ser. B}, 98(2):384--399, 2008.

\bibitem{MR3086663}
Joanna~A. Ellis-Monaghan and Iain Moffatt.
\newblock {\em Graphs on surfaces}.
\newblock Springer Briefs in Mathematics. Springer, New York, 2013.
\newblock Dualities, polynomials, and knots.

\bibitem{MR2928906}
Iain Moffatt.
\newblock Partial duals of plane graphs, separability and the graphs of knots.
\newblock {\em Algebr. Geom. Topol.}, 12(2):1099--1136, 2012.

\bibitem{MR2994404}
Iain Moffatt.
\newblock Separability and the genus of a partial dual.
\newblock {\em European J. Combin.}, 34(2):355--378, 2013.

\bibitem{Moffatt:2013ty}
Iain Moffatt.
\newblock Excluded minors and the ribbon graphs of knots.
\newblock {\em J. Graph Theory}, to appear.

\bibitem{SM}
Iain Moffatt and Johanna Str\"{o}mberg.
\newblock On the ribbon graphs of links in real projective space.
\newblock {\em preprint}.

\bibitem{MR2099147}
Neil Robertson and P.~D. Seymour.
\newblock Graph minors. {XX}. {W}agner's conjecture.
\newblock {\em J. Combin. Theory Ser. B}, 92(2):325--357, 2004.

\end{thebibliography}

% ------------------------------------------------------------------------
\end{document}